\journal{arXiv}
\DeclareMathAlphabet{\mathpzc}{OT1}{pzc}{m}{it}
\DeclareMathAlphabet{\mathpzc}{OT1}{pzc}{m}{it}
\newtheorem{theorem}{Theorem}
\newtheorem{proposition}[theorem]{Proposition}
\newdefinition{definition}{Definition}
\newdefinition{hypothesis}{Hypothesis}
\newdefinition{problem}{Problem}
\newdefinition{remark}{Remark}
\newdefinition{example}{Example}
\def\N{\boldsymbol{\mathbbm{N}}}
\def\R{\boldsymbol{\mathbbm{R}}}
\def\defining{\overset{\mathbf{def}}=}
\def\S{\mathcal{S}}
\def\Exp{\boldsymbol{\mathbbm{E}}}
\def\prob{\boldsymbol{\mathbbm{P} } }
\def\x{\mathbf{x} }
\DeclareMathAlphabet{\mathpzc}{OT1}{pzc}{m}{it}
\newcommand\Z{\mathop{}\!\mathbf{Z} } 
\newcommand\gr{\mathop{}\!\mathrm{G} } 
\newcommand\lr{\mathop{}\!\mathrm{LR} } 
\newcommand\upC{\mathbf{C}}
\newcommand\upP{\mathbf{P}}
\newcommand\upW{\mathbf{W}}
\newcommand\lt{\mathop{}\!\mathrm{left} } 
\newcommand\rt{\mathop{}\!\mathrm{right} } 
\newcommand\alg{\mathop{}\!\mathrm{alg} } 
\newcommand\side{\mathop{}\!\mathrm{side} } 
\newcommand\dc{\mathop{}\!\mathrm{DC} } 
\newcommand\mt{\mathop{}\!\mathrm{mt} } 
\newcommand\nfd{\mathop{}\!\mathrm{NFD} } 
\newcommand\ffd{\mathop{}\!\mathrm{FFD} } 
\newcommand\bfd{\mathop{}\!\mathrm{BFD} } 
\newcommand\ms{\mathop{}\!\mathrm{MS} } 
\newcommand\ma{\mathop{}\!\mathrm{MA} } 
\newcommand\nms{\mathop{}\!\mathrm{NMS} } 
\newcommand\case{\mathop{}\!\mathrm{case}}
\newcommand\Case{\mathop{}\!\mathrm{case}}
\begin{document}

\begin{frontmatter}

\title{Application and Assessment of Divide-and-Conquer-based \\
	Heuristic Algorithms 
	for some Integer Optimization Problems}
\tnotetext[mytitlenote]{This material is based upon work supported by project HERMES 54748 from Universidad Nacional de Colombia,
Sede Medell\'in.}

\author[mymainaddress]{Fernando A Morales} 
\cortext[mycorrespondingauthor]{Corresponding Author}
\ead{famoralesj@unal.edu.co}




\address[mymainaddress]{Escuela de Matem\'aticas
Universidad Nacional de Colombia, Sede Medell\'in \\
Carrera 65 \# 59A--110, Bloque 43, of 106,
Medell\'in - Colombia\corref{mycorrespondingauthor}}



\begin{abstract}
In this paper three heuristic algorithms using the Divide-and-Conquer paradigm are developed and assessed for three integer optimizations problems: Multidimensional Knapsack Problem (d-KP), Bin Packing Problem (BPP) and Travelling Salesman Problem (TSP). For each case, the algorithm is introduced, together with the design of numerical experiments, in order to empirically establish its performance from both points of view: its computational time and its numerical accuracy. 
\end{abstract}

\begin{keyword}
Divide-and-Conquer Method, Multidimensional Knapsack Problem, Bin Packing Problem, Traveling Salesman Problem, Monte Carlo simulations, method's efficiency.
\MSC[2010] 90C10 \sep 68Q87 
\end{keyword}

\end{frontmatter}



%
%
%
%
%
%
%
%
%
\section{Introduction}   
%
%
%
%
Broadly speaking, the Divide-and-Conquer method for Integer Optimization problems aims to reduce the computational complexity of the problem, at the price of loosing accuracy on the solutions. So far, it has been successfully introduced in \cite{MoralesMartinez} by Morales and Mart\'inez for solving the 0-1 Knapsack Problem (0-1KP). Its success lies on the fact that it is a good \textbf{balance} between computational complexity and solution quality.  It must be noted that the method does not \textbf{compete} with existing algorithms, it \textbf{complements} them. In addition, the method is ideally suited for parallel implementation, which is crucial in order to attain its full advantage. The aim of this paper is to extend the Divide-and-Conquer method and asses its performance to three Integer Optimization problems: Multidimensional Knapsack Problem (d-KP), Bin Packing Problem (BPP) and the Traveling Salesman Problem (TSP).

Throughout the problems presented here (as well as 0-1KP in \cite{MoralesMartinez}), the Divide-and-Conquer method consists in dividing an original integer optimization problem $ \Pi $, in two subproblems, namely $ \Pi_{\lt} $, $ \Pi_{\rt} $, using a \textbf{greedy function} as the main criterion for such subdivision. The process can be recursively iterated several times on the newly generated subproblems, creating a tree whose nodes are subproblems and whose leaves have an adequate (previously decided) size. The latter are the subproblems to be solved, either exactly or approximately by an \textbf{oracle algorithm} (an existing algorithm) chosen by the user according to the needs.  Finally, a \textbf{reassemble process} is done on the aforementioned tree to recover a feasible solution for the original problem $ \Pi $; this is the solution delivered by the method.

The 0-1KP has been extensively studied from many points of view, in contrast d-KP has been given little attention despite its importance. Among the heuristic methods for approximating d-KP the papers \cite{Loulou1979NewGH, Frville1986HeuristicsAR, diubin2008greedy} work on greedy methods, see \cite{frieze1984approximation, oguz1980polynomial} for heuristics based on the dual simplex algorithm (both pursuing polynomial time approximations), see \cite{Gavish1985EfficientAF} for a constraint-relaxation based strategy, see \cite{Volgenant1990AnIH} for a Lagrange multipliers approach and \cite{Chu1998AGA} for a genetic algorithm approach. Finally, the generalized multidemand multidimensional knapsack problem is treated heuristically in \cite{LaiTwoStage} using tabu search principles to detect cutting hyperplanes. As important as all these methods are, it can be seen that none of them introduces the Divide-and-Conquer principle; matter of fact, this paradigm was introduced to the 0-1KP for the first time in \cite{MoralesMartinez}. It follows that the strategy introduced in Section \ref{Sec The Problem d-KP} is not only new (to the Author's best knowledge) but it is complementary to any of the aforementioned methods.

The BPP ranks among the most studied problems in combinatorial optimization. Most of the efforts are focused on describing its approximation properties (see \cite{FernandezdelaVega1981BinPC, Karmarkar1982}) or finding lower bounds for the worst case performance ratio of the proposed heuristic algorithms (see \cite{YaoBPP, VanVlietBPP}). It is important to stress that among the many heuristics introduced to solve BPP, the algorithm presented in \cite{LeeOnlineBPP} follows the Divide-and-Conquer strategy, although it uses a harmonic partition of the items; which is different from the partition presented in Section \ref{Sec Bin Packing Problem BPP}. Yet again, the analysis presented by the Authors concentrates on lower bounds determination for the worst case scenario. Furthermore, from the worst case performance point of view, the classic Best Fit and First Fit algorithms have the same behavior as a large class of packing rules. It follows that a expected performance ratio analysis is needed in order to have more insight on the algorithm's performance; however, little attention has been given in this direction (see \cite{CoffmanBPPSurvey}), which is the target of our analysis here. As before the Divide-and-Conquer method combines perfectly with any of the preexisting ones, although the analysis here will be limited to its interaction with the most basic algorithms.

Finally, from the perspective of the TSP, the Divide-and-Conquer method is included in the family of heuristics reducing the size of the problem using some criteria, fixing edges \cite{FischeReducingTSP}, introducing multiple levels of reduction \cite{Walshaw2002AMA, Walshaw2004MultilevelRF} or the introduction of sub-tours from a previously known one \cite{TSPKazuonoriDC}. The sub-problems are solved using a known algorithm (or oracle) whether exact \cite{Applegate1998, ApplegateBCC03, Dantzig1954} or approximate \cite{Lin1973AnEH, LocalTSPJohnson1990}. Next, a merging tour algorithm \cite{CookMerging2003, TSPApplegatChained} is used to build a global tour. The present work applied to TSP in Section \ref{Sec Traveling Salesman Problem TSP} has the structure just described. 

The paper is organized as follows. In the rest of this section the notation is introduced, as well as the common criteria for designing the numerical experiments. Each of the analyzed problems has a separate section, hence Section \ref{Sec The Problem d-KP} is dedicated for d-KP, Section \ref{Sec Bin Packing Problem BPP} for BPP and Section \ref{Sec Traveling Salesman Problem TSP} is devoted to TSP. In the three cases, the section starts with a brief review of the problem, continues applying the Divide-and-Conquer principle for the problem at hand and closes with the corresponding numerical experiments (description and results). Finally, Section \ref{Sec Conclusions and Final Discussion} delivers the conclusions and closing discussion of the work. 
%
%
%
%
%
%
\subsection{Notation}\label{Sec Notation}
%
%
%
%
%
%
In this section the mathematical notation is introduced. For any natural number $ N \in \N $, the symbol $ [N] \defining \{ 1, 2, \ldots , N \} $ indicates the sorted set of the first $ N $ natural numbers. 
A particularly important set is $ \mathcal{S}_{N} $, where $ \mathcal{S}_{N} $ denotes the collection of all permutations in $ [N] $. 
For any real number $ x \in \R $ the floor and ceiling functions are given (and denoted) by $ \lfloor x \rfloor \defining \max\{k: k \leq x, \, k \text{ integer}\} $, $ \lceil x \rceil \defining \max\{k: k\geq x, \, k \text{ integer}\} $, respectively. Given an instance of a problem the symbols $ z_{\dc} $ and $ z^{*} $ indicate the optimal solution given by the presented Divide-and-Conquer algorithm and the optimal solution respectively. The analogous notation is used for $ T_{\dc}, T^{*} $ for the corresponding computational times.
%
%
\subsection{Numerical Experiments Design}\label{Sec Numerical Experiments Design}
%
%
The numerical experiments are aimed to asses the performance of the Divide-and-Conquer algorithms presented here. Our ultimate goal is to reliably compute the expected solution's accuracy and the computational time furnished by the method. To that end the probabilistic approach is adopted, whose construction is based on the Law of Large Numbers and the Central Limit Theorem  (which is written below for the sake of completeness). The theorem \ref{Th the Law of Large Numbers} assures the convergence of the random variables while the theorem \ref{Them Confidence Interval} yields the number of necessary trials, in order to assure that our computed (averaged) values, lie within a confidence interval centered at the actual expected value.
\begin{theorem}[Law of Large Numbers]\label{Th the Law of Large Numbers}
	Let $ \big(\Z^{(n)}:n\in \N\big) $ be a sequence of independent, identically distributed random variables with expectation $ \Exp\big(\Z^{(1)}\big) $, then
	\begin{equation}\label{Eq the Law of Large Numbers}
	\prob\bigg[ \Big\vert \frac{\Z^{(1)} + \Z^{(2)} + \ldots + \Z^{(n)}}{n} - 
	\Exp\big(\Z^{(1)}\big) \Big\vert > 0 \bigg]
	\xrightarrow[n\, \rightarrow \,\infty]{} 0 ,
	\end{equation}
	i.e., the sequence $ \big(\Z^{(n)}:n\in \N\big) $  converges in probability to its expectation $ \Exp(\Z^{(1)}) $, in the Ces\`aro sense.
\end{theorem}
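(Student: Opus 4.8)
The plan is to prove the Weak Law of Large Numbers, reading the displayed estimate \eqref{Eq the Law of Large Numbers} in its usual quantitative form: for every fixed $\varepsilon > 0$,
\[
\prob\bigg[ \Big\vert \frac{\Z^{(1)} + \Z^{(2)} + \cdots + \Z^{(n)}}{n} - \mu \Big\vert > \varepsilon \bigg] \xrightarrow[n\to\infty]{} 0, \qquad \mu \defining \Exp\big(\Z^{(1)}\big),
\]
and throughout write $S_n \defining \Z^{(1)} + \cdots + \Z^{(n)}$ for the partial sums. (Taken literally with $>0$ in place of $>\varepsilon$ the statement fails for any non-degenerate $\Z^{(1)}$, so the $\varepsilon$-form is the one intended.)

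First I would treat the square-integrable case, which is the one actually invoked in Section \ref{Sec Numerical Experiments Design}: if $\Var\big(\Z^{(1)}\big) = \sigma^{2} < \infty$, then independence gives $\Var\big(S_n/n\big) = \sigma^{2}/n$, and Chebyshev's inequality yields $\prob\big[\,|S_n/n - \mu| > \varepsilon\,\big] \le \sigma^{2}/(n\varepsilon^{2})$, which tends to $0$. In every Monte Carlo estimator used later the variables $\Z^{(n)}$ are bounded (ratios of objective values, packing-rule counts, indicator variables), hence square-integrable, so this already covers all the applications in the paper.

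For the statement as phrased, which assumes only $\Exp\big|\Z^{(1)}\big| < \infty$, I would pass to characteristic functions. Let $\phi(t) \defining \Exp\big(e^{\mathrm{i}t\Z^{(1)}}\big)$; integrability of $\Z^{(1)}$ makes $\phi$ differentiable at the origin, so $\phi(t) = 1 + \mathrm{i}\mu t + o(t)$ as $t \to 0$. By the i.i.d.\ hypothesis the characteristic function of $S_n/n$ equals $\phi(t/n)^{n} = \big(1 + \mathrm{i}\mu t/n + o(1/n)\big)^{n} \to e^{\mathrm{i}\mu t}$ for each $t \in \R$, and $e^{\mathrm{i}\mu t}$ is the characteristic function of the unit mass at $\mu$. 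Lévy's continuity theorem then gives $S_n/n \to \mu$ in distribution, and since the limit is deterministic this upgrades automatically to convergence in probability, which is precisely the claim. A characteristic-function-free alternative is Kolmogorov's truncation: write $\Z^{(i)} = \Z^{(i)}\ind_{\{|\Z^{(i)}|\le n\}} + \Z^{(i)}\ind_{\{|\Z^{(i)}|> n\}}$, estimate the first family by Chebyshev after bounding its variance by $n\,\Exp\big|\Z^{(1)}\big|$, and control the remaining tail directly from $\Exp\big|\Z^{(1)}\big| < \infty$.

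The one genuine difficulty is the gap between these two regimes: with a finite second moment the argument collapses to a single application of Chebyshev's inequality, whereas dropping that assumption and keeping only a finite mean forces either the Lévy continuity theorem or a truncation scheme whose variance bookkeeping, while routine, must be carried out with some care. Since the theorem is used here only to legitimize averaging bounded Monte Carlo outputs, I would make the Chebyshev proof the body of the argument and record the general case as a remark.
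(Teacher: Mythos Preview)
Your proposal is correct and, in fact, considerably more detailed than what the paper does: the paper's entire proof is a one-line reference to Billingsley's \emph{Probability and Measure}, with no argument given. Your observation that the displayed statement is literally false with $>0$ in place of $>\varepsilon$ is well taken; the paper does not comment on this. Your two-tier treatment (Chebyshev under a second-moment assumption, then characteristic functions or Kolmogorov truncation for the general $L^{1}$ case) is the standard textbook route and would be exactly what one finds upon following the citation. The only practical remark is that, since the paper uses the theorem solely as background justification for averaging bounded Monte Carlo outputs, matching its level of detail would mean simply citing a reference; your Chebyshev paragraph alone already exceeds what the paper supplies.
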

\begin{proof}
	The proof and details can be found in \cite{BillingsleyProb}.
\end{proof}
%
%
%
\begin{theorem}\label{Them Confidence Interval}
	Let $ x $ be a scalar statistical variable with mean $ \bar{x} $ and variance $ \sigma^{2} $.
	\begin{enumerate}[(i)]
		\item The \textbf{number of Bernoulli trials} necessary to get a 95\% confidence interval is given by 
		\begin{equation}\label{Eq Bernoulli Trials for Confidence Intervals}
		n \defining \big( \frac{1.96}{0.05}\big)^{2} \sigma^{2}.
		\end{equation}

		\item The \textbf{95 percent confidence interval} is given by 
		\begin{equation}\label{Eq Confidence Interval Real-Valued}
		I_{x} \defining \Bigg[ \bar{x} - 1.96 \, \sqrt{ \frac{\sigma^{2}}{n} }, \bar{x} + 1.96 \,\sqrt{ \frac{\sigma^{2}}{n} } \, \Bigg] .
		\end{equation}
	\end{enumerate}
\end{theorem}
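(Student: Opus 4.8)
The plan is to derive both claims from the Central Limit Theorem applied to the i.i.d. sample underlying the statistical variable $ x $. Write $ \X_{1}, \X_{2}, \ldots, \X_{n} $ for $ n $ independent Bernoulli trials (realizations) of $ x $, each with mean $ \bar{x} $ and finite variance $ \sigma^{2} $, and let $ \bar{\X}_{n} \defining \frac{1}{n}\sum_{k=1}^{n} \X_{k} $ be the sample mean. First I would invoke the Central Limit Theorem to conclude that the standardized sum $ \sqrt{n}\,(\bar{\X}_{n} - \bar{x})/\sigma $ converges in distribution to a standard normal random variable $ \Z $ as $ n \to \infty $; combined with Theorem \ref{Th the Law of Large Numbers}, which guarantees that $ \bar{\X}_{n} $ concentrates around $ \bar{x} $, this is the only probabilistic input the argument needs.

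For part (ii), I would use the defining property of the quantile $ 1.96 = z_{0.975} $, namely $ \prob[\,\lvert \Z \rvert \le 1.96\,] = 0.95 $ for $ \Z $ standard normal. Substituting the CLT approximation $ \sqrt{n}\,(\bar{\X}_{n} - \bar{x})/\sigma \approx \Z $ and rewriting the event $ \{\lvert \Z \rvert \le 1.96\} $ as $ \big\{\, \lvert \bar{\X}_{n} - \bar{x} \rvert \le 1.96\,\sqrt{\sigma^{2}/n} \,\big\} $ yields $ \prob\big[\, \bar{x} \in I_{x} \,\big] \approx 0.95 $ with $ I_{x} $ exactly as displayed, which is the assertion.

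For part (i), I would impose that the half-width of the interval from (ii) not exceed the prescribed margin of error $ 0.05 $, i.e. $ 1.96\,\sqrt{\sigma^{2}/n} \le 0.05 $. Squaring and isolating $ n $ gives $ n \ge (1.96/0.05)^{2}\,\sigma^{2} $, and since the half-width is strictly decreasing in $ n $, the smallest admissible sample size is the stated value (rounded up to an integer, a point I would note in passing).

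The main obstacle here is a caveat to be stated carefully rather than a genuine difficulty: the Central Limit Theorem furnishes only an asymptotic (approximate) coverage probability, so both statements hold in the limiting sense that is standard in Monte Carlo practice. A secondary point is that $ \sigma^{2} $ is typically unknown, so in the actual experiments one replaces it by a consistent sample-variance estimate — equivalently, passes to a Student-$ t $ quantile that converges to $ 1.96 $ — which I would mention but not belabor; finiteness of $ \sigma^{2} $ is the only genuine hypothesis being used.
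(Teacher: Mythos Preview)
Your proposal is correct and follows exactly the approach the paper indicates: the paper's own proof consists of the single sentence ``The proof is based on the Central Limit Theorem, see \cite{Thompson} for details,'' so your CLT-based derivation is precisely what is intended, only spelled out in full. If anything, your sketch is more complete than the paper's, and your caveats about the asymptotic nature of the coverage and the need to estimate $\sigma^{2}$ are appropriate refinements.
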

\begin{proof}
	The proof is based on the Central Limit Theorem, see \cite{Thompson} for details.
\end{proof}
%

\section{The Mutidimensional Knapsack Problem (d-KP)}\label{Sec The Problem d-KP}
%
%
In the current section the Multidimensional Knapsack Problem (d-KP) is recalled and a Divide-and-Conquer based algorithm, devised for this problem is presented. The technique uses a strategic choice of efficiency coefficients. 
%
%
\begin{problem}[Multidimensional Knapsack Problem, d-KP]\label{Pblm Original d-KP}
	Consider the problem
	\begin{subequations}\label{Eqn Original MKP Problem}
		\begin{equation}\label{Eqn MKP Objective Function}
		z^{*} \defining \max \sum\limits_{j\, = \, 1 }^{ N } p(j) \, x(j) ,
		\end{equation}
		subject to
		\begin{align}\label{Eqn MKP Capacity Constraints}
		& \sum\limits_{j \, = \, 1 }^{ N } w(i, j) \, x(j) \leq c(i), &
		& \text{for all } i \in [D] .
		\end{align}
		\begin{align}\label{Eqn MKP Choice Constraint}
		& x(j) \in \{0,1\}, &
		& \text{for all } j \,\in \,[N] .
		\end{align}	
	\end{subequations}
	Here, $\big( c(i) \big)_{i = 1}^{D} $ is the list of \textbf{knapsack capacities}, $ \big\{ \big(w(i, j)\big)_{j = 1}^{N}, i \in [D] \big\} $ is the list of corresponding weights and $  \big( x(j) \big)_{j = 1}^{N} $ is the list of binary-valued \textbf{decision variables}. In addition, the \textbf{weight coefficients} $ \big\{ \big(w(i, j)\big)_{j = 1}^{N}, i \in [D] \big\} $, as well as the knapsack capacities $ \big( c(i) \big)_{i = 1}^{N} $
	are all positive integers. 
	In the following
	$ z^{*} , \x^{*} $ denote the \textbf{optimal} objective function value and solution respectively. 
	The parameters $ \big(p(j) \big)_{j = 1}^{N} \subseteq (0, \infty]^{N} $ are known as the \textbf{profits}. Finally, in the sequel one of the d-KP instances will be denoted by $ \Pi = \big\langle (c(i))_{i \in [D]}, (p(j))_{j \in [N]}, \big\{ w(i, j) : j \in [N], i \in [D] \big\} \big\rangle $.	
\end{problem}
The following efficiency coefficients are introduced, they are scaled according to the respective constraint capacities
\begin{definition}[d-KP efficiency coefficients]\label{Def Efficiency Coefficient}
	For any instance of d-KP (Problem \ref{Pblm Original d-KP}), the \textbf{efficiency coefficients} are defined by
	\begin{align}\label{Eq Coefficient}
	& g(j) \defining \frac{p(j)}{\sum\limits_{i \,= \,1}^{D} \dfrac{w(i,j)}{c(i)} }, 
	& & j \in [N] .
	\end{align}
\end{definition}
Before continuing the analysis, the next hypothesis is adopted.
\begin{hypothesis}\label{Hyp Non Triviality and Sorting}
	In the sequel it is assumed that the instances $ \Pi $ of the d-KP satisfy the following conditions
	%
	\begin{subequations}\label{Subeq Weights non-triviality conditions}
		\begin{align}\label{Ineq All items are eligible}
		& w(i, j) \leq c(i), & & \text{ for all } i \in [N], j \in [D], 
		\end{align}
		\begin{align}\label{Ineq All constraints are non-trivial}
		& \sum\limits_{j \, = \, 1}^{N} w(i, j) > c(i) , &
		& \text{for all } i \in [D].
		\end{align}
	\end{subequations}
	%
\end{hypothesis}
\begin{remark}[d-KP Setting]\label{Rem Setting of 0-1 KP}
	
	%
	The condition \eqref{Subeq Weights non-triviality conditions} in \textsc{Hypothesis} \ref{Hyp Non Triviality and Sorting} guarantees two things. First, every item is eligible to be chosen (Inequality \eqref{Ineq All items are eligible}). Second, none of the constraints can be dropped (Inequality \eqref{Ineq All constraints are non-trivial}). 
	
	%
\end{remark}
\subsection{A Divide-and-Conquer Approach}
%
%
%
%
A Divide-and-Conquer method for solving the 0-1KP was introduced in \cite{MoralesMartinez}. There, an extensive discussion (both, theoretical and empirical) was presented on the possible strategies to implement it. Here, its conclusions are merely adjusted to yield the following method
\begin{definition}[A Divide-and-Conquer Algorithm for d-KP]
	\label{Def Divide and Conquer for d-KP}
	Let  \newline
	$ \Pi = \big\langle (c(i))_{i \in [D]}, (p(j))_{j \in [N]}, \big\{ w(i, j) : j \in [N], i \in [D] \big\} \big\rangle  $ be an instance of \textsc{Problem} \ref{Pblm Original d-KP}
	\begin{enumerate}[(i)]
		\item Sort the items in decreasing order according to the efficiency coefficients and re-index them so that
		\begin{equation}\label{Ineq Sorting}
		g(1) \geq 
		g(2) \geq 
		\ldots \geq 
		g(N) .
		\end{equation}
		
		\item Define $ V_{\lt} =  \{ j \in [N]: j \text{ is odd}\} $, $ V_{\rt} =  \{ j \in [N]: j \text{ is even}\}$ and
		\begin{align*}
		& c_{\lt}(i) =  \bigg\lceil \frac{\sum_{j \in V_{\lt}} w(i, j)}{\sum_{j = 1}^{n} w(i, j)} \bigg\rceil , &
		& c_{\rt}(i) = c(i) - c_{\lt}(i) , & 
		& \text{for all } i \in [D].
		\end{align*}

		\item A Divide-and-Conquer pair of \textsc{Problem} \ref{Pblm Original d-KP} is the couple of subproblems $ \big(\Pi_{\side}: \, \side \in \{\lt, \rt\} \big) $, each with input data \newline
		$ \Pi_{\side} = 
		\big\langle (c_{\side}(i))_{i \in [D]}, \big( p(j)\big)_{ j \, \in \, V_{\side} }, \big( w(i, j)\big)_{ j \, \in \, V_{\side} } \big\rangle $.
		In the sequel, \newline
		$ \big(\Pi_{\side}, s = \lt, \rt \big) $ is referred as a \textbf{D\&C pair} $ z_{\side}^{*} $ denotes the optimal solution value of the problem $ \Pi_{\side} $.

		\item The D\&C solution is given by
		\begin{align}\label{Eq DAC tree solution}
		& \x^{\mt}_{\dc} \defining \x^{\mt}_{\lt} \cup \x^{\mt}_{\rt}, &
		& z_{\dc}^{\mt} \defining z^{\mt}_{\lt} + z^{\mt}_{\rt}  .
		\end{align}
		Here, the index $ \mt $ indicates the method used to solve the problem, in this work and particular problem, only $ \mt =* $ is used.
		Also, some abuse of notation is introduced, denoting by $ \x^{\mt}_{\side} $ the solution of $ \Pi_{\side} $ and using the same symbol as a set of chosen items (instead of a vector) in the union operator. In particular, the maximal possible value occurs when all the summands are at its max i.e., the method approximates the optimal solution by the feasible solution $ \x^{\mt}_{\dc} = \x^{\mt}_{\lt} \cup \x^{\mt}_{\rt}  $ with objective value $ z^{\mt}_{\dc} = z^{\mt}_{\lt} + z^{\mt}_{\rt} $.
	\end{enumerate}
\end{definition}
\begin{example}[Divide-and-Conquer Algorithm on d-KP]\label{Exm d-KP Instance}
	Consider the d-KP instance described by the table \ref{Tbl Example Problem}, with knapsack capacities $ c(1) = 16, c(2) = 11 $ and number of items $ N = 6 $. 
	\begin{table}[h!]
		\begin{centering}
			\rowcolors{2}{gray!25}{white}
			\begin{tabular}{c | c c c c c c }
				\rowcolor{gray!80}
				\hline
				$ j $ & 1 & 2 & 3 & 4 & 5 & 6 
				\\
				$ p(j) $ & 5 & 11 & 11 &71 & 2 & 2  \\
				$ w(1, j) $ & 6 & 7 & 1 & 7 & 7 & 4 \\
				$ w(2,j) $ & 4 & 1 & 1 & 6 & 1 & 8 \\
				$ g(j) $ & 13.53 & 41.01 & 144.03 & 14.28 & 7.46 & 4.14 \\
				\hline		
			\end{tabular}
			\caption{d-KP problem of \textsc{Example} \ref{Exm d-KP Instance}, knapsack capacities $ c(1) = 16, c(2) = 11 $, number of items $ N = 6 $, $ D = 2 $.}
			\label{Tbl Example Problem}
		\end{centering}
	\end{table}

	\noindent In this particular case the D\&C pair is given by
	\begin{align*}
	& \Pi_{ \lt }: &
	V_{ \lt} & = [3, 4, 5 ], &
	& c_{\lt}(1) =  8,  & 
	& c_{\lt}(2) = 5,
	\\
	& \Pi_{ \rt }: &
	V_{ \rt} & = [2, 1, 6 ], &
	& c_{\rt}(1) =  8,  & 
	& c_{\rt}(2) = 6.
	\end{align*}
	%
	%
\end{example}
\begin{proposition}\label{Thm quality certificate DCM}
	Let  $ \Pi = \big\langle (c(i))_{i \in [D]}, (p(j))_{j \in [N]}, \big\{ w(i, j) : j \in [N], i \in [D] \big\} \big\rangle  $ be an instance of d-KP (problem \ref{Pblm Original d-KP}). Let $ (\Pi_{\lt}, \Pi_{\rt} ) $ be the associated Divide-and-Conquer pair introduced in Definition \ref{Def Divide and Conquer for d-KP}.
	Then, 
	\begin{subequations}\label{Subeq quality certificate DCM}
		\begin{align}\label{Eq feasibility DCM}
		& \x_{\dc}^{\mt} = \x_{\lt}^{\mt} \cup \x_{\rt}^{\mt} \text{ is } \Pi\text{-feasible} , &
		& \text{for } \mt = \gr, \, \ast, \, \lr ,
		\end{align}
		\begin{align}\label{Ineq quality certificate DCM}
		& z^{\mt}_{\lt} + z^{\mt}_{\rt} = z_{\dc}^{\mt} \leq 
		z^{\mt}, & & \text{for } \mt = \ast, \, \lr .
		\end{align}
	\end{subequations}
	Here, $ \gr $ and $ \lr $ indicate any greedy method and $ \lr $ the linear relaxation.
\end{proposition}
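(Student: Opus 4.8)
The plan is to exploit the single structural feature that makes the construction work: by definition the index sets $V_{\lt}$ and $V_{\rt}$ form a partition of $[N]$, and the split capacities satisfy $c_{\lt}(i)+c_{\rt}(i)=c(i)$ for every $i\in[D]$. Everything else is bookkeeping. First I would record that, because $\{V_{\lt},V_{\rt}\}$ partitions $[N]$, a point $\x_{\lt}$ feasible for $\Pi_{\lt}$ (supported on $V_{\lt}$) and a point $\x_{\rt}$ feasible for $\Pi_{\rt}$ (supported on $V_{\rt}$) glue unambiguously into one vector $\x_{\dc}$ in $\{0,1\}^{N}$ (or in $[0,1]^{N}$ in the relaxed case), and that for this glued vector both the objective value and each left-hand side of a capacity constraint are the sums of the corresponding quantities of the two pieces.

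For the feasibility claim \eqref{Eq feasibility DCM} I would fix $\mt\in\{\gr,\ast,\lr\}$ and take $\x_{\side}^{\mt}$ to be the solution produced for $\Pi_{\side}$, which is feasible for $\Pi_{\side}$: this is exactly what the greedy oracle guarantees when $\mt=\gr$, and for $\mt=\ast,\lr$ it is the optimal point of the corresponding program, which is feasible by definition and exists because the zero vector always lies in the feasible region of $\Pi_{\side}$. I would then check the three families of constraints for $\x_{\dc}^{\mt}$: the membership or box constraints are inherited coordinatewise from the two pieces, and for each $i\in[D]$,
\[
\sum_{j=1}^{N}w(i,j)\,\x_{\dc}^{\mt}(j)=\sum_{j\in V_{\lt}}w(i,j)\,\x_{\lt}^{\mt}(j)+\sum_{j\in V_{\rt}}w(i,j)\,\x_{\rt}^{\mt}(j)\le c_{\lt}(i)+c_{\rt}(i)=c(i),
\]
the inequality by feasibility of $\x_{\lt}^{\mt}$ and $\x_{\rt}^{\mt}$ for their subproblems, the last equality by the definition of $c_{\rt}(i)$. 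For the quality inequality \eqref{Ineq quality certificate DCM} I would note that the objective value of $\x_{\dc}^{\mt}$ equals $z_{\lt}^{\mt}+z_{\rt}^{\mt}=z_{\dc}^{\mt}$ by the same additive splitting, and then argue that for $\mt=\ast$ the vector $\x_{\dc}^{\ast}$ is a particular $\Pi$-feasible point while $z^{\ast}$ is the maximum of the objective over all $\Pi$-feasible points, whence $z_{\dc}^{\ast}\le z^{\ast}$; the case $\mt=\lr$ is verbatim with ``$\Pi$-feasible'' replaced by ``feasible for the linear relaxation of $\Pi$''.

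I do not expect a genuine obstacle; the argument is essentially ``a maximum over a sub-family of feasible points cannot exceed the maximum over all feasible points.'' The only two points that deserve a word of justification, which I would state as short remarks rather than compute, are: first, that the split capacities are non-negative, i.e.\ $c_{\rt}(i)=c(i)-c_{\lt}(i)\ge 0$, which follows readily since the ratio $\sum_{j\in V_{\lt}}w(i,j)/\sum_{j=1}^{N}w(i,j)$ is at most one and $c(i)$ is a positive integer, so that each $\Pi_{\side}$ is a genuine d-KP instance with non-empty feasible region; and second, that the oracle is actually applied to, and returns a feasible point of, the subproblem in question. I would also add the remark that the inequality is deliberately \emph{not} asserted for $\mt=\gr$: the value $z_{\dc}^{\gr}$ is not a maximum over any family that contains the glued greedy solution, so it may overshoot $z^{\gr}$, which is precisely why \eqref{Ineq quality certificate DCM} is restricted to $\mt=\ast,\lr$.
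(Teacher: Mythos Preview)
Your argument is correct and is exactly the intended one; the paper's own proof consists of the single word ``Trivial,'' and you have simply spelled out the obvious verification (partition of indices, additivity of the capacity split, and the fact that the glued point is feasible hence dominated by the optimum). Your side remarks on non-negativity of $c_{\rt}(i)$ and on why $\mt=\gr$ is excluded from \eqref{Ineq quality certificate DCM} are accurate and go slightly beyond what the paper bothers to say.
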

\begin{proof}
	Trivial.
\end{proof}
As already stated, the purpose of this work is to determine experimentally, under which conditions an instance $ \Pi $ of d-KP can be solved using the proposed method in an justifiable way. That is, if the trade between computational complexity and accuracy is acceptable. To this end, a set of parameters is introduced, which has proved to be useful in order to classify the d-KP instances, within these classifications the output of numerical methods is comparable, see \cite{CORAL}. 
\begin{definition}[Tightness Ratios]\label{Def Tightness}
	Let $ \Pi = \big\langle (c(i))_{i \in [D]}, (p(j))_{j \in [N]}, \big\{ w(i, j) : j \in [N], i \in [D] \big\} \big\rangle  $ be an instance of d-KP, then the set of tightness ratios is given by
	\begin{align}\label{Eq Tightness}
	& t(i) \defining \frac{c(i)}{\sum_{j = 1}^{N} w(i, j)} , &
	& \text{for all } i \in [D].
	\end{align}
	(Observe the differences with respect to the efficiency coefficients introduced in Definition \ref{Def Efficiency Coefficient}.)
\end{definition}
%
%
%
\subsection{Numerical Experiments}\label{Sec Numerical Experiments d-KP}
%
In order to numerically asses the method's effectiveness on d-KP, the numerical experiments are designed according to its main parameters i.e., size ($ N \in \{6, 10, 20, 50, 100, 250, 500, 1000\} $), restrictiveness ($ D  \in \{2, 3, 4, 5, 6\} $) and tightness ($  t = \{0.25, 0.5, 0.75 \} $). 
While the values of the first three parameters $ N, D, t  $ were decided based on experience (see  \cite{CORAL}), the number of trials $ k = 1000 $, was decided based on \textsc{Equation} \eqref{Eq Bernoulli Trials for Confidence Intervals}. The variance $ \sigma^{2} $ was chosen as the worst possible value for a sample of 100 trials, which delivered an approximate value $ k = 1000 $. (After, executing the 1000 experiments the variance showed consistency with the initial value computed from the 100 trial-sample.)

The random instances $ \Pi = \big\langle (\upC(i))_{i \in [D]}, (\upP(j))_{j \in [N]}, \big\{ \upW(i, j) : j \in [N], i \in [D] \big\} \big\rangle  $ were generated as follows. 
\begin{enumerate}[a.]
	\item The profits $ (\upP(i))_{i \in [N]} $ will be independent random variables with uniform distribution on the interval $ [1, N\cdot D] $ (i.e., $ \prob\big(\upP(i)  = \ell \big) = \frac{1}{N\cdot D} $ for all $ \ell \in [1, N\cdot D] $ and $ i \in [N] $). 
	
	\item Initial capacities $ (\upC^{(0)}(i))_{i \in [D]} $ are generated as independent random variables uniformly distributed on the interval $ [1, N\cdot D] $. 
	
	\item For all $ i \in [D] $, the weights $  (\upW(i,j))_{j \in [N]} $ are almost independent random variables, uniformly distributed in the interval $ [1, \upC^{(0)}(i)] $. Next, the capacities are computed using $ \upC_{i} = t(i)  \sum_{j = 1}^{N} \upW(i, j)  $, where $ t(i) $ is the tightness ratio introduced in Definition \ref{Def Tightness} and decided a-priori.
\end{enumerate}
\begin{definition}[d-KP Performance Coefficients]\label{Def Solution and Time Performance Coefficients}
	The d-KP performance coefficients are given by the percentage fractions of the numerical solution and the computational time given by the proposed algorithm with respect to the exact solution when using the same method of resolution for both cases, i.e., 
	\begin{align}\label{Eq Solution and Time Performance Coefficients}
	& S_{f} = 100 \times \dfrac{z_{\dc}}{z^{*}} ,  &
	& T_{f} = 100 \times \dfrac{T_{\dc}}{T^{*}} .
	\end{align}
\end{definition}
The exact solution of the d-KP problem was computed using the public Python library MIP, i.e., in our case the MIP subroutine is playing the role of the oracle solving the subproblems.  
The tables \ref{Tbl d-KP, t = 0.25}, \ref{Tbl d-KP, t = 0.5}, \ref{Tbl d-KP, t = 0.75} below, summarize the expected values of the performance coefficients $ S_{f} $ (solution fraction) and $ T_{f} $ (time fraction) (see \textsc{Definition} \ref{Def Solution and Time Performance Coefficients}) for the tightness values $ t = 0.25, 0.5, 0.75 $ respectively and dimensions $ D = 2, 4, 6 $. The corresponding graphs are depicted in the figures  \ref{Fig DC d-KP, t = 0.25},  \ref{Fig DC d-KP, t = 0.5} and  \ref{Fig DC d-KP, t = 0.75}

\begin{table}[h!]
	\begin{minipage}{0.55\textwidth}
		\begin{centering}
			\rowcolors{2}{gray!25}{white}
			\begin{tabular}{c | cc  cc  cc}
				\rowcolor{gray!80}
				\hline
				Items & \multicolumn{2}{ c  }{D = 2} & \multicolumn{2}{ c }{D = 4}
				& \multicolumn{2}{ c }{D = 6} \\
				\rowcolor{gray!80} $ N $ 
				& $  S_{f} $ & $ T_{f} $ 
				& $  S_{f} $ & $ T_{f} $ 
				& $  S_{f} $ & $ T_{f} $ 
				\\
				6	& 51.85	& 63.25	& 8.39	& 46.87	& 33.18	& 51.28 \\
				10	& 76.39	& 54.63	& 60.88	& 54.42	& 70.75	& 48.34 \\
				20	& 88.98	& 54.13	& 82.84	& 51.43	& 87	  & 53.65 \\
				50	& 96.36	& 51.34	& 93.69	& 52.39	& 95.69	& 51.96 \\
				100	& 98.67	& 52.09	& 97.2	& 51.49	& 97.53	& 51.84 \\
				250	& 99.63	& 53.47	& 99.03	& 51.65	& 99.01	& 51.86 \\
				500	& 99.84	& 51.63	& 99.55	& 50.9	& 99.47	& 51.07 \\
				1000 & 99.94 & 51.41 & 99.81 & 50.23 & 99.77 & 50.52 \\
				\hline		
			\end{tabular}
			\caption{Performance table for the Divide-and-Conquer method, solving d-KP problem. Tightness ratio $ t = 0.25 $. Bernoulli trials $ k = 1000 $. The performance coefficients $ S_{f} $ and  $ T_{f}$ are introduced in \textsc{Equation}  \eqref{Eq Solution and Time Performance Coefficients}}
			\label{Tbl d-KP, t = 0.25}
		\end{centering}
	\end{minipage}
	\begin{minipage}{0.45\textwidth}
		\centering
		{\includegraphics[scale = 0.5]{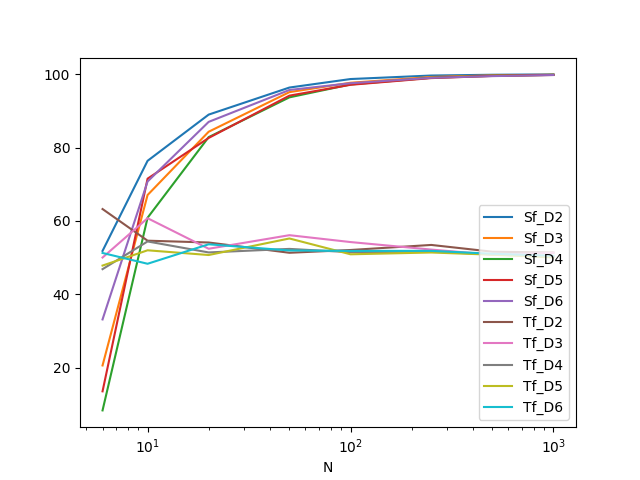}
		}
		\captionof{figure}{Graphic solution fraction $ S_{f} $ and time fraction $ T_{f} $ for the values displayed in table \ref{Tbl d-KP, t = 0.25}. The domain is given by the number of items $ N $ in logarithmic scale.}
		\label{Fig DC d-KP, t = 0.25}
	\end{minipage}
\end{table}
\begin{table}[h!]
	\begin{minipage}{0.55\textwidth}
		\begin{centering}
			\rowcolors{2}{gray!25}{white}
			\begin{tabular}{c | cc  cc  cc}
				\rowcolor{gray!80}
				\hline
				Items & \multicolumn{2}{ c  }{D = 2} & \multicolumn{2}{ c }{D = 4}
				& \multicolumn{2}{ c }{D = 6} \\
				\rowcolor{gray!80} $ N $ 
				& $  S_{f} $ & $ T_{f} $ 
				& $  S_{f} $ & $ T_{f} $ 
				& $  S_{f} $ & $ T_{f} $ 
				\\
				6	& 78.62	& 54.42	& 75.57	& 56.41	& 68.54	& 57.77 \\
				10	& 88.69	& 53.79	& 84.35	& 50.98	& 81.03	& 52.51\\
				20	& 94.49	& 52.57	& 92.56	& 49.24	& 90.45	& 53.64 \\
				50	& 98.35	& 52.44	& 96.96	& 51.44 & 	96.27 & 51.3 \\
				100 & 99.2	& 50.91	& 98.51	& 52.66	& 98.07	& 51.03 \\
				250	& 99.74	& 51.95	& 99.43	& 51.05	& 99.21	& 50.77 \\
				500	& 99.89	& 50.49	& 99.73	& 51.29	& 99.58	& 50.57 \\
				1000	& 99.94	& 50.98	& 99.89	& 51.13	& 99.8 & 50.72 \\
				\hline		
			\end{tabular}
			\caption{Performance table for the Divide-and-Conquer method, solving d-KP problem. Tightness ratio $ t = 0.5 $. Bernoulli trials $ k = 1000 $. The performance coefficients $ S_{f} $ and  $ T_{f}$ are introduced in \textsc{Equation}  \eqref{Eq Solution and Time Performance Coefficients}}
			\label{Tbl d-KP, t = 0.5}
		\end{centering}
	\end{minipage}
	\begin{minipage}{0.45\textwidth}
		\centering
		{\includegraphics[scale = 0.5]{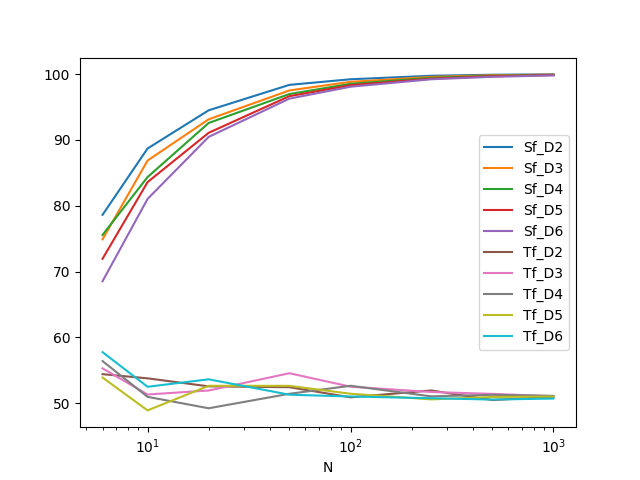}
		}
		\captionof{figure}{Graphic solution fraction $ S_{f} $ and time fraction $ T_{f} $ for the values displayed in table \ref{Tbl d-KP, t = 0.5}. The domain is given by the number of items $ N $ in logarithmic scale.}
		\label{Fig DC d-KP, t = 0.5}
	\end{minipage}
\end{table}
\begin{table}[h!]
	\begin{minipage}{0.55\textwidth}
		\begin{centering}
			\rowcolors{2}{gray!25}{white}
			\begin{tabular}{c | cc  cc  cc}
				\rowcolor{gray!80}
				\hline
				Items & \multicolumn{2}{ c  }{D = 2} & \multicolumn{2}{ c }{D = 4}
				& \multicolumn{2}{ c }{D = 6} \\
				\rowcolor{gray!80} $ N $ 
				& $  S_{f} $ & $ T_{f} $ 
				& $  S_{f} $ & $ T_{f} $ 
				& $  S_{f} $ & $ T_{f} $ 
				\\
				6 &	90.76 &	52.83 &	84.26 &	52.76 &	75.83 &	48.85 \\
				10 & 93.42 & 51.71 & 90.91 & 52.77 & 89.93 & 50.96 \\
				20	& 97.46	& 50.83	& 96.16 & 54.96	& 95.41	& 51.21 \\
				50 & 99.04 & 52.87 & 98.63 & 51.7 &	98.24 &	50.37 \\
				100	& 99.57	& 52.33 & 99.33 & 52.34	& 99.13	& 51.59 \\
				250	& 99.83 & 51.44	& 99.7	& 50.77 & 99.65	& 51.54 \\
				500 & 99.94	& 51.11	& 99.86 & 50.67	& 99.8	& 51.1 \\
				1000 & 99.97 & 51.14 & 99.94 & 51.16 & 99.9	& 50.38 \\
				\hline		
			\end{tabular}
			\caption{Performance table for the Divide-and-Conquer method, solving d-KP problem. Tightness ratio $ t = 0.75 $. Bernoulli trials $ k = 1000 $. The performance coefficients $ S_{f} $ and  $ T_{f}$ are introduced in \textsc{Equation}  \eqref{Eq Solution and Time Performance Coefficients}}
			\label{Tbl d-KP, t = 0.75}
		\end{centering}
	\end{minipage}
	\begin{minipage}{0.45\textwidth}
		\centering
		{\includegraphics[scale = 0.5]{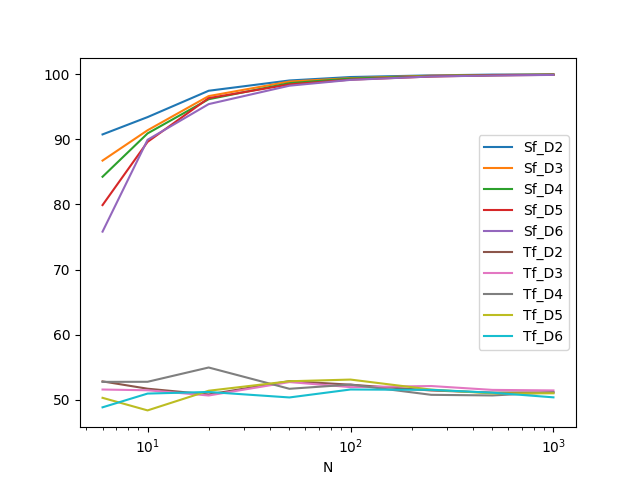}
		}
		\captionof{figure}{Graphic solution fraction $ S_{f} $ and time fraction $ T_{f} $ for the values displayed in table \ref{Tbl d-KP, t = 0.75}. The domain is given by the number of items $ N $ in logarithmic scale.}
		\label{Fig DC d-KP, t = 0.75}
	\end{minipage}
\end{table}
%
%
\section{The Bin Packing Problem BPP}\label{Sec Bin Packing Problem BPP}
%
%
In the current section the Bin Packing Problem (BPP) is introduced and a list of greedy algorithms is reviewed; these will be used in the following to compute/measure the expected performance of a Divide-and-Conquer based proposed method acting on BPP.

The Bin-Packing Problem is described as follows. Given $ N $ objects, each of a given weight $ (w(j))_{j = 1}^{N} \subseteq (0,c] $ and bins of capacity $ c $ (at least $ N $ of them), the aim is to assign the objects in a way that the minimum number of bins is used and that every object is packed in one of the bins.  Without loss of generality, the capacity of the items can be normalized to $ c = 1 $ and the weight of the items accordingly. Hence, the BPP formulates in the following way
\begin{problem}[Bin Packing Problem, BPP]\label{Pblm Original BPP}
	Consider the problem
	\begin{subequations}\label{Eqn Original BPP Problem}
		\begin{equation}\label{Eqn BPP Objective Function}
		z^{*} \defining \min \sum\limits_{i\, = \, 1 }^{ N } y(i) ,
		\end{equation}
		subject to
		\begin{align}\label{Eqn BPP Capacity Constraints}
		& \sum\limits_{j \, = \, 1 }^{ N } w(j) \, x(i, j) \leq y(j), &
		& \text{for all } i \in [N] .
		\end{align}
		\begin{align}\label{Eqn BPP Choice Constraint}
		& x(i, j), y(i) \in \{0,1\}, &
		& \text{for all }i,  j \,\in \,[N] .
		\end{align}	
	\end{subequations}
	Here, $\big( w(j) \big)_{j = 1}^{N} \subseteq (0,1] $ is the list of \textbf{item weights}, $ \big\{ x(i, j): i, j \in [N] \big\} $ is the list of binary valued \textbf{item assignment decision variables}, with $ x(i,j) = 1 $ if item $ j $ is assigned to bin $ i $ and $ x(i, j) = 0 $ otherwise. Finally, $  \big( y(i) \big)_{i = 1}^{N} $ is the list of binary valued \textbf{bin-choice decision variables}, setting $ y(i) = 1 $ if bin $ i $ is nonempty and $ y(i) = 0 $ otherwise.
\end{problem}
As it is well-know, most of the literature is concerned with heuristic methods for solving BPP (see \cite{CoffmanBPPSurvey}). Therefore, this work is focused on the interaction of a Divide-and-Conquer paradigm with the three most basic heuristic algorithms: Next Fit Decreasing (NFD), First Fit Decreasing (FFD) and Best Fit Decreasing (BFD) (see \cite{MartelloKnapsackPs} and \cite{KorteVygen}). For the sake of completeness these algorithms are described below. In the three cases, it is assumed that the items are sorted decreasingly according their weights, i.e. $ w(1) \geq w(2) \geq \ldots \geq w(N) $.
\begin{enumerate}[a.]
	\item \textbf{Next Fit Algorithm.} The first item is assigned to bin $ 1 $. Each of the items $ 2, \ldots, N $ is handled as follows: the item $ j $ is assigned to the current bin if it fits; if not it is assigned to a new bin (next fit).
	
	\item \textbf{First Fit Algorithm.} Consider the items increasingly according to its index. For each item $ j $, assign it to the first initialized bin $ i $ (first fit) where it fits (if any), and if it does not fit in any initialized bin, assign it to a new one. 
	
	\item \textbf{Best Fit Algorithm.} Consider the items increasingly according to its index. For each item $ j $, assign it to the bin $ i $ where it fits (if any) and it is as full as possible; if it does not fit in any initialized bin, assign it to a new one. 
\end{enumerate}
\subsection{The Divide-and-Conquer Approach}
%
%
%
%
The NF, FF and BF algorithms work exactly as NFD, FFD and BFD, except that they do not act on a sorted list of items. It has been established that the NF algorithm (and consequently FF as well as BF) has good performance on items of small weight (see \cite{KorteVygen}). Therefore, it is only natural to deal with the pieces according to their weight in decreasing order i.e., in the BPP problem the weights are used as greedy function (or efficiency coefficients) in order to propose the corresponding Divide-and-Conquer heuristic
\begin{definition}[Divide-and-Conquer Method for BPP]\label{Def Divide and Conquer for BPP}
	Let  \newline
	$ \Pi = \big\langle (w(j))_{j \in [N]} \big\rangle  $ be an instance of \textsc{Problem} \ref{Pblm Original BPP}
	\begin{enumerate}[(i)]
		\item Sort the items in decreasing order according to their weights and re-index them so that
		\begin{equation}\label{Ineq Sorting BPP}
		w(1) \geq 
		w(2) \geq 
		\ldots \geq 
		w(N) .
		\end{equation}
		
		\item Define $ V_{\lt} =  \{ j \in [N]: j \text{ is odd}\} $ and $ V_{\rt} =  \{ j \in [N]: j \text{ is even}\} $.  
		
		\item A Divide-and-Conquer pair of \textsc{Problem} \ref{Pblm Original BPP} is the couple of subproblems $ \big(\Pi_{\side}: \, \side \in \{\lt, \rt\} \big) $, each with input data $ \Pi_{\side} = 
		\big\langle ( w(j))_{ j \, \in \, V_{\side} } \big\rangle $.
		In the sequel, $ \big(\Pi_{\side}, s = \lt, \rt \big) $ is referred as a \textbf{D\&C pair}. 
		We denote by $ z_{\side}^{\alg} $ the solution value of the problem $ \Pi_{\side} $, while $ z^{\alg} $ denotes the solution value of the full problem $ \Pi $, furnished by the algorithm $ \alg $. 	

		\item The D\&C solution of \textsc{Problem} \ref{Pblm Original BPP} is given by
		\begin{align}\label{Eq DAC BPP tree solution}
		& \x^{\alg}_{\dc} \defining \x^{\alg}_{\lt} \cup \x^{\alg}_{\rt}, &
		& z_{\dc}^{\alg} \defining z^{\alg}_{\lt} + z^{\alg}_{\rt}  .
		\end{align}
		Here, the index $ \alg $ indicates the method used to solve the problem. This work, uses $ \alg = \nfd, \ffd, \bfd $, standing for the Next Fit Decreasing, First Fit Decreasing and Best Fit Decreasing algorithms respectively (described in the previous section). 
		Also, some abuse of notation is introduced, denoting by $ \x^{\alg}_{\side} = \{x^{\alg}_{\side}(i, j): i, j \in V_{\side} \} $ the optimal solution of $ \Pi_{\side} $ and using the same symbol as a set of chosen items (instead of a vector) in the union operator. In particular, the minimal possible value occurs when all the summands are at its minimum i.e., the method approximates the optimal solution by the feasible solution $ \x^{\alg}_{\dc} = \x^{\alg}_{\lt} \cup \x^{\alg}_{\rt}  $ with objective value $ z^{\alg}_{\dc} = z^{\alg}_{\lt} + z^{\alg}_{\rt} $.
	\end{enumerate}
\end{definition}
\begin{example}[Divide-and-Conquer Algorithm on BPP]\label{Exm BPP Instance}
	Consider the BPP instance described by the table \ref{Tbl BPP Example Problem}, number of items $ N = 6 $. 
	\begin{table}[h]
		\begin{centering}
			\rowcolors{2}{gray!25}{white}
			\begin{tabular}{c | c c c c c c }
				\rowcolor{gray!80}
				\hline
				$ j $ & 1 & 2 & 3 & 4 & 5 & 6 
				\\
				$ w(j) $ & 0.5 & 0.7 & 0.25 & 0.1 & 0.85 & 0.31  \\
				\hline		
			\end{tabular}
			\caption{BPP problem of \textsc{Example} \ref{Exm BPP Instance}, number of items $ N = 6 $.}
			\label{Tbl BPP Example Problem}
		\end{centering}
	\end{table}

	\noindent For this particular case the D\&C pair is given by
	\begin{align*}
	& \Pi_{ \lt }: &
	V_{ \lt} & = [ 5, 1, 3 ], 
	\\
	& \Pi_{ \rt }: &
	V_{ \rt} & = [2, 6, 4 ] .
	\end{align*}
\end{example}
\begin{proposition}\label{Thm feasibility DCM BPP}
	Let  $ \Pi = \big\langle (w(j))_{j \in [N]} \big\rangle  $ be an instance of the BPP problem \ref{Pblm Original BPP}. Let $ (\Pi_{\lt}, \Pi_{\rt} ) $ be the associated Divide-and-Conquer pair introduced in Definition \ref{Def Divide and Conquer for BPP}.
	Then, 
	\begin{align}\label{Eq feasibility DCM BPP}
	& \x_{\dc}^{\alg} = \x_{\lt}^{\alg} \cup \x_{\rt}^{\alg} \text{ is } \Pi\text{-feasible} , &
	& \text{for } \alg = \nfd, \,  \ffd, \, \bfd .
	\end{align}
\end{proposition}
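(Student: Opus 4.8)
The plan is to verify the only nontrivial claim, namely that the union of feasible packings for the two subproblems $\Pi_{\lt}$ and $\Pi_{\rt}$ is a feasible packing for $\Pi$. First I would unpack what \emph{$\Pi_{\side}$-feasible} means: for each side $\side\in\{\lt,\rt\}$ the algorithm $\alg\in\{\nfd,\ffd,\bfd\}$ produces assignment variables $\{x^{\alg}_{\side}(i,j): i,j\in V_{\side}\}$ together with bin-choice variables $\{y^{\alg}_{\side}(i)\}$ such that every item $j\in V_{\side}$ is placed in exactly one bin, and every used bin respects the unit capacity constraint \eqref{Eqn BPP Capacity Constraints}. Since $V_{\lt}$ and $V_{\rt}$ partition $[N]$ (the odd and even indices), each item of $\Pi$ lies in exactly one of the two index sets, hence is assigned exactly once by the combined solution.

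Next I would handle the bins. The key observation is that the two subproblems use disjoint sets of bins — one simply relabels the bins of $\Pi_{\rt}$ so that they do not collide with those of $\Pi_{\lt}$, which is legitimate because there are at least $N$ bins available in $\Pi$ and the two subproblems together use at most $|V_{\lt}|+|V_{\rt}| = N$ bins. With this relabelling, the union $\x^{\alg}_{\dc}=\x^{\alg}_{\lt}\cup\x^{\alg}_{\rt}$ inherits each capacity constraint verbatim from whichever subproblem the bin came from: a bin used in $\Pi_{\lt}$ still satisfies $\sum_{j\in V_{\lt}} w(j)\,x^{\alg}_{\lt}(i,j)\le 1$, and no item of $V_{\rt}$ was added to it, so the full constraint $\sum_{j\in[N]} w(j)\,x^{\alg}_{\dc}(i,j)\le 1$ holds, and symmetrically for bins coming from $\Pi_{\rt}$. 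Thus \eqref{Eqn BPP Capacity Constraints} and \eqref{Eqn BPP Choice Constraint} are all met, giving \eqref{Eq feasibility DCM BPP}.

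There is essentially no obstacle here; the statement is a bookkeeping verification and the companion \textsc{Proposition} \ref{Thm quality certificate DCM} for d-KP was dispatched with ``Trivial.'' The only point requiring a word of care — and the closest thing to a ``hard part'' — is making explicit that the bin index sets of $\Pi_{\lt}$ and $\Pi_{\rt}$ can be taken disjoint without exceeding the $N$ bins of $\Pi$, so that concatenating the two packings does not force two items from opposite sides into the same physical bin. Once that is noted, feasibility of the union is immediate from feasibility of each piece and the fact that $V_{\lt}\cup V_{\rt}=[N]$ with $V_{\lt}\cap V_{\rt}=\emptyset$. Accordingly I would present the proof in three short lines: partition of items, disjoint relabelling of bins, inheritance of each capacity constraint.
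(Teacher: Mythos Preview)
Your proposal is correct and essentially matches the paper's own treatment: the paper disposes of this proposition with the single word ``Trivial.'' Your three-line verification (partition of items, disjoint relabelling of bins, inheritance of each capacity constraint) is exactly the routine bookkeeping that justifies that word, so there is nothing to add or correct.
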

\begin{proof}
	Trivial.
\end{proof}
\begin{remark}
	Observe that, unlike \textsc{Proposition} \ref{Thm quality certificate DCM}, here, there is no claim of an inequality of control such as \eqref{Ineq quality certificate DCM}. This is because the NFD, FFD and BFD methods are all heuristic. Hence, it can happen (and it actually does for some few instances) that $ z_{\dc}^{\alg} \geq z^{\alg} $ for $ \alg = \nfd, \ffd, \bfd $; it is worth noticing that this phenomenon is more frequent for the NFD method.
\end{remark}
%
%
\subsection{Numerical Experiments}\label{Sec Numerical Experiments BPP}
%
In order to numerically asses the effectiveness of the proposed Divide-and-Conquer algorithm, the experiments are designed according to its main parameter i.e., size ($ N \in \{20, 50, 100, 250, 500, 1000, 1500, 2000\} $) and number of trials ($ k $). The number of items was decided based on experience (more specifically the Divide-and-Conquer algorithm yields poor results for low values of $ N $ as it can be seen in the results), while the number of trials $ k = 1500 $ was chosen based on \textsc{Equation} \eqref{Eq Bernoulli Trials for Confidence Intervals}. The variance $ \sigma^{2} $ was chosen as the worst possible value for a sample of 100 trials, which delivered an approximate value $ k = 1500 $. (After, executing the 1500 experiments the variance showed consistency with the the initial value computed from the 100 trial-sample.)

For the random instances $ \Pi = \big\langle  (\upW(j))_{j \in [N]} \big\rangle  $, the weights $  (\upW(j))_{j \in [N]} $ are independent random variables uniformly distributed on the interval $ [0, 1] $ (notice that $ \prob(\upW(i) = 0 ) = 0 $ for all $ i \in [N] $). Next, the numerical and time performance coefficients are introduced.
\begin{definition}[BPP Performance Coefficients]\label{Def Solution and Time Performance Coefficients BPP}
	The BPP performance coefficients are given by the percentage fractions of the solution and the computational time given by the Divide-and-Conquer approach with respect to the exact solution, when using the same method of resolution for both cases, i.e., 
	\begin{align}\label{Eq Solution and Time Performance Coefficients BPP}
	& S_{f}^{\alg} = 100 \times \dfrac{z^{\alg}}{z^{\alg}_{\dc}} ,  &
	& T_{f}^{\alg} = 100 \times \dfrac{T_{\dc}^{\alg}}{T^{\alg}}, &
	& \alg = \nfd, \, \ffd, \, \bfd .
	\end{align}
	Here, $ z^{\alg}, z^{\alg}_{\dc} $ indicate the solution furnished for the full problem and for the Divide-and-Conquer heuristic respectively, when using the algorithm $ \alg $. 
\end{definition}
\begin{remark}\label{Ref Solution and Time Performance Coefficients BPP}
	The definition of $ S^{\alg}_{f} $ (equation \eqref{Eq Solution and Time Performance Coefficients BPP}) for BPP differs from that given in d-KP (equation \eqref{Eq Solution and Time Performance Coefficients}) in order to keep normalized the fraction. Since BPP is a minimization problem the Divide-and-Conquer solution will be larger than the exact solution ($ z^{\alg}_{\dc} \geq z^{\alg} $) for most of the cases. This contrasts with d-KP which is maximization problem, where the opposite behavior takes place ($ z_{\dc} \leq z^{*} $).
\end{remark}
The table \ref{Tbl BPP Global Table} below, summarizes the expected values of the performance coefficients $ S^{\alg}_{f} $ (solution fraction) $ T^{\alg}_{f} $ (time fraction) (see \textsc{Definition} \ref{Def Solution and Time Performance Coefficients BPP}). The corresponding graphs are depicted in the figures \ref{Fig BPP Global figures} (a) for $ S^{\alg}_{f} $ and (b) for $ T^{\alg}_{f} $, respectively.
\begin{table}[h]
	\begin{centering}
		\rowcolors{2}{gray!25}{white}
		\begin{tabular}{c | cc  cc  cc}
			\rowcolor{gray!80}
			\hline
			Items & \multicolumn{2}{ c  }{NFD} & \multicolumn{2}{ c }{FFD}
			& \multicolumn{2}{ c }{BFD} \\
			\rowcolor{gray!80} $ N $ 
			& $  S^{\nfd}_{f} $ & $ T^{\nfd}_{f} $ 
			& $  S^{\ffd}_{f} $ & $ T^{\ffd}_{f} $ 
			& $  S^{\bfd}_{f} $ & $ T^{\bfd}_{f} $ 
			\\
			20 & 99.03	& 126.86	& 96.57	& 70.56	& 96.57	& 92.7 \\
			50	& 99.77	& 112.98	& 98.55	& 59.75	& 98.59	& 82.73 \\
			100	& 99.83	& 111.01	& 98.92	& 54.66	& 98.92	& 76.72 \\
			250	& 99.99	& 102.36	& 99.59	& 52.09	& 99.58	& 65.09 \\
			500	& 99.99	& 99.97	& 99.84	& 50.26	& 99.84	& 58.35 \\
			1000 & 99.98 & 100.04 &	99.92 & 50.43	& 99.92	& 55.04 \\
			1500 & 99.99	& 101.05	& 99.94	& 50.83	& 99.94	& 53.85 \\
			2000 & 99.99	& 101.16 & 99.96 & 50.76 & 99.96 & 53.05 \\
			\hline		
		\end{tabular}
		\caption{Performance table for the Divide-and-Conquer method, when solving the BPP problem with the three methods: NFD, FFD and BFD. Bernoulli trials $ k = 1500 $. The performance coefficients $ S^{\alg}_{f} $ and  $ T^{a\lg}_{f} $ for $ \alg = \nfd, \ffd, \bfd $ are introduced in \textsc{Equation} \eqref{Eq Solution and Time Performance Coefficients BPP}.}
		\label{Tbl BPP Global Table}
	\end{centering}
\end{table}
\begin{figure}[h!]
	\begin{subfigure}[Solution Fraction $ S_{f} $ Performance.]
		{\includegraphics[scale = 0.5]{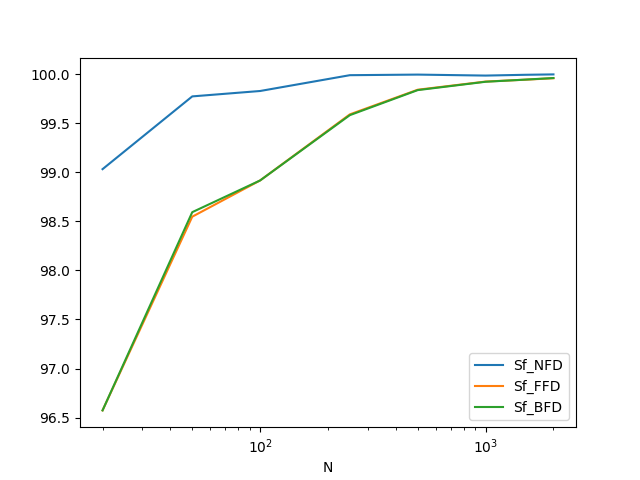}}
	\end{subfigure}
	\begin{subfigure}[Time Fraction $ T_{f} $ Performance.]
		{\includegraphics[scale = 0.5]{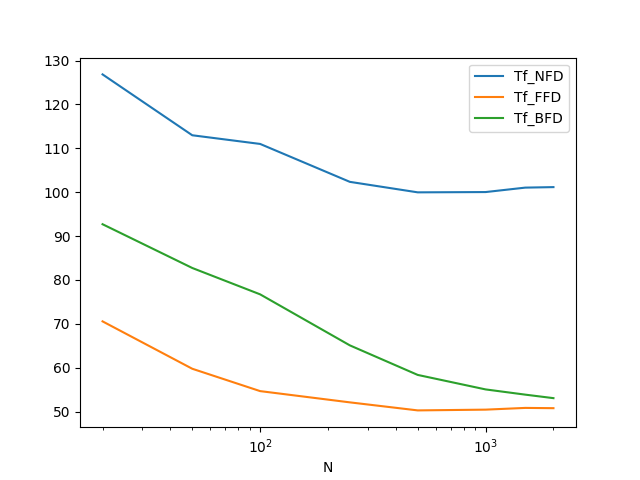}}
	\end{subfigure}
	\caption{The graphs above depict the approximate expected performance for $ k = 1500 $ Bernoulli trials. In figure (a) the solution fraction $ S^{\alg}_{f} $, in figure (b) the time fraction $ T^{\alg}_{f} $ coefficients are displayed, for the BPP problem (see \textsc{Definition} \ref{Def Solution and Time Performance Coefficients BPP} and $ \alg = \nfd, \ffd, \bfd $. 
		The domain $ N $ is given in logarithmic scale, the values are extracted from the table \ref{Tbl BPP Global Table}.}
	\label{Fig BPP Global figures}
\end{figure}
%
%
\section{The Traveling Salesman Problem TSP}\label{Sec Traveling Salesman Problem TSP}
%
%
The last problem to be analyzed under a proposed Divide-and-Conquer algorithm is the Traveling Salesman Problem TSP. Some graph-theory definitions are recalled in order to describe it.
\begin{definition}\label{Def Graph Theory Theoretical Definitions}
	\begin{enumerate}[(i)]
		\item An edge-weighted, directed graph is a triple $ G = (V, E, d) $, where $ V $ is the set of vertices, $ E \subseteq V \times V $ is the set of (directed) edges or arcs and $ d : E \rightarrow \R^{+} $ is a distance function assigning each arc $ e \in E $ a distance $ d(e) $.
		
		\item A path is a list $ (u_{1},...,u_{k}) $ of vertices $ u_{i} \in V $ for all $ i = 1,..., k $,  holding $ (u_{i}, u_{i + 1}) \in E $ for $ i = 1,..., k - 1 $.
		
		\item A Hamiltonian cycle in $ G $ is a path $ p = (u_{1},...,u_{k}, u_{1}) $ in $ G $ , where $ k = \vert V \vert $ and $ \bigcup^{ k }_{ i = 1 } u_{i} = V $ (i.e., each vertex is visited exactly once except for $ u_{1} $).
		
		\item Given an edge-weighted directed graph $ G = (V, E, d) $ and a vertex subset $ U $ of $ V $, the induced subgraph, denoted by $ G(U) = (U, E^{U}, d^{U}) $ is the subgraph of $ G $ whose vertex set is $ U $, whose edge set consists on all edges in $ G $ that have both endpoints on $ U $ and whose distance function $ d^{U} = d\big\vert_{U \times U}$ is the restriction of the distance function to the set $ U \times U \subseteq V \times V $. 
	\end{enumerate}
\end{definition}
\begin{problem} [The Traveling Salesman Problem, TSP]\label{Pblm TSP Original Problem}
	Given a directed graph $ G = (V, E, d) $ with $ n = \vert V \vert $ vertices, the TSP is to find a Hamiltonian cycle $ ( u_{1}, \ldots, u_{n}, u_{1} ) $ such that the cost $ C(u_{1}, \ldots, u_{n}, u_{1}) = d((u_{k}, u_{1})) + \sum^{ k - 1 }_{ i=1 } d((u_{i}, u_{i + 1})) $ (the total sum of the weights of the edges in the cycle) is minimized.
	\begin{enumerate}[(i)]
		\item The TSP is said to be symmetric if $ d((u, v)) = d((v, u)) $ for all $ u, v \in V $. Otherwise, it is said to be asymmetric.
		
		\item The TSP is said to be metric if $ d((u, v)) \leq d((u, w)) + d((w, v) ) $ for all $ u, v, w \in V $.
	\end{enumerate}
\end{problem}
In the current work it will be assumed that the graph is complete i.e., all possible edges in both directions are present. In addition, three cases are analyzed: 
\begin{enumerate}[a.]
	\item \textbf{MS}: when the TSP is metric and symmetric.
	
	\item \textbf{MA}: when the TSP is metric but not symmetric.
	
	\item \textbf{NMS}: when the TSP is not metric but it is symmetric.
\end{enumerate}

Since it is so widespread and contributes little to our discussion, the formulation of TSP is omitted (see Section 10.3 in \cite{Bertsimas} for a formulation).
%
%
\subsection{A Divide-and-Conquer Approach}\label{Sec Divide and Conquer TSP}
%
In order to apply our Divide-and-Conquer approach on the TSP, it is necessary to introduce an efficiency coefficient to break the original problem in two sub-problems. Hence, in the TSP case, the efficiency for each vertex of the graph, is defined as the sum of the distances of all the edges incident on the node. 
\begin{definition}[TSP efficiency coefficients]\label{Def Efficiency Coefficient TSP}
	Given an edge-weighted complete directed graph $ G = (E, V, d) $, for each node $ u \in V $ its \textbf{efficiency coefficient} is defined as
	\begin{equation}\label{Eq Coefficient TSP}
	g(u) \defining \sum_{v \, \in \, V - \{u\}} d( (u,  v) ) + d( (v, u ) ) . 
	\end{equation}
\end{definition}
Next, a greedy algorithm needs to be introduced in order merge two cycles (the two solution cycles corresponding to each sub-problem: $ \Pi_{\lt}, \Pi_{\rt} $).
\begin{definition}[Cycle Greedy Merging]\label{Def Cycle Greedy Merging}
	Let $ G = (V, E, d) $ be an edge-weighted complete directed with $ \vert V \vert = N $. Let $ V_{\lt}, V_{\rt} $ be a subset partition of $ V$ with $ \vert V_{\lt} \vert = p $ , $ \vert V_{\lt} \vert = q $ and let
	\begin{align}\label{Pblm Cycles for Merging}
	C_{\lt} & = (u_{1},...,u_{p}, u_{1}), &
	C_{\rt} & = (v_{1},...,v_{q}, v_{1}),
	\end{align}
	be two cycles contained in $ V_{\lt} $ and $ V_{\rt} $ respectively. Let $ (u_{i}, u_{i + 1} ) ,  (v_{j}, v_{j + 1} ) $ be the most expensive arcs in $ C_{\lt} $ and $ C_{\rt} $ respectively and break possible ties choosing the lowest index. (Here, the indexes $ i, i + 1 $ and  $ j, j + 1 $ are understood in $ \mod p $ and $ \mod q $ respectively.) Define the merged cycle as
	\begin{equation}\label{Eq Greedy Merged Cycle}
	C = (u_{1}, \ldots, u_{i - 1},\underbrace{ u_{i}, v_{j+1}}, v_{j + 2}, \ldots, 
	v_{q}, v_{1}, \ldots, v_{j - 1}, \underbrace{v_{j}, u_{i + 1}},u_{i + 2} \ldots, u_{p}, u_{1}) .
	\end{equation}
	Where the arcs $ ( u_{i}, v_{j+1} ), ( v_{j}, u_{i + 1} ) $ (marked with underbraces) were included to replace the corresponding arcs $ (u_{i}, u_{i + 1} ) $ and $  (v_{j}, v_{j + 1} ) $.  (See Figure \ref{Fig Merging Cycles Schematics} for an example.)
\end{definition}
\begin{remark}\label{Rem Merging Cycles}
	It must be observed that the merging process introduced in Definition \ref{Def Cycle Greedy Merging} is well-defined, because the choice of arcs to be removed is unique as well as the inclusion of the new arcs, marked with underbraces in \eqref{Eq Greedy Merged Cycle}. Once the arcs $ (u_{i}, u_{i + 1} ) $, $  (v_{j}, v_{j + 1} ) $ are removed, the pair of arcs 
	$ ( u_{i}, v_{j+1} ), ( v_{j}, u_{i + 1} ) $ are the only possible choice for respective replacement that would deliver a joint cycle, given the orientations of $ C_{\lt} $ and $ C_{\rt} $. This observation is particularly important for the asymmetric instances of TSP (MA).
	%
	
	%
	%
\end{remark}
\begin{definition}[A Divide-and-Conquer Algorithm for TSP]\label{Def Divide and Conquer for TSP}
	Let \newline
	$ \Pi = (V, E, d) $ be an instance of \textsc{Problem} \ref{Pblm TSP Original Problem}
	\begin{enumerate}[(i)]
		\item Sort the vertices in increasing order according to their efficiencies, that is
		\begin{equation}\label{Ineq Sorting TSP}
		g(v_{ \pi(1)}) \leq 
		g(v_{ \pi(2)} ) \leq 
		\ldots \leq 
		g(v_{ \pi(N)} ) ,
		\end{equation}
		where $ \pi \in \S_{N} $ is an adequate permutation and $ V = \big\{v_{i} :i \in [N] \big\} = \big\{v_{\pi(i)} :i \in [N] \big\} $.
		
		\item Define $ V_{\lt} =  \{v_{ \pi(i}) \in [N]: i \text{ is odd}\} $ and 
		$ V_{\rt} =  \{ v_{ \pi(i)} \in [N]: i \text{ is even}\} $.  
		
		\item A Divide-and-Conquer pair of \textsc{Problem} \ref{Pblm Original d-KP} is the couple of subproblems $ \big(\Pi_{\side}: \, \side \in \{\lt, \rt\} \big) $, each with input data $ \Pi_{\side} = 
		G(V_{\side}) $, i.e., the induced subgraph introduced in Definition \ref{Def Graph Theory Theoretical Definitions}. In the following, $ \big(\Pi_{\side}, s = \lt, \rt \big) $ is referred as a \textbf{D\&C pair} and denote by $ z_{\side}^{*} $ the optimal solution value of the problem $ \Pi_{\side} $.

		\item The D\&C solution 
		of \textsc{Problem} \ref{Pblm TSP Original Problem} is given by the greedy merging of cycles $ C_{\lt}, C_{\rt} $ (see Definition \ref{Def Cycle Greedy Merging}) furnished by the solution of $ \Pi_{\lt} $ and $ \Pi_{\rt} $ respectively. (See Example \ref{Exm TSP Instance} and Figure \ref{Fig Merging Cycles Schematics} for an illustration.)
	\end{enumerate}
\end{definition}
\begin{example}[Divide-and-Conquer Algorithm on TSP]\label{Exm TSP Instance}
	Consider the TSP instance described by the table \ref{Tbl TSP Example Problem}, with $ N = 6 $ vertices. For this particular case the following order holds: $ g(v_{3}) \leq g(v_{5}) \leq g( v_{2} ) \leq g( v_{6} ) \leq g( v_{1} ) \leq g(v_{4}) $. Therefore, 
	\begin{table}[h!]
		\begin{centering}
			\rowcolors{2}{gray!25}{white}
			\begin{tabular}{c | c c c c c c}
				\rowcolor{gray!80}
				\hline
				\diagbox{$ v_{i} $}{$ v_{j} $} & $ v_{1} $ & $ v_{2} $ & $ v_{3} $ 
				& $ v_{4} $ & $ v_{5} $ & $ v_{6} $	\\
				$ v_{1} $ & 0.00	& 0.61	& 0.10	& 1.08	& 0.46	& 0.11	\\
				$ v_{2} $ & 0.61	& 0.00	& 0.53	& 0.71	& 0.17	& 0.54	\\
				$ v_{3} $ & 0.10	& 0.53	& 0.00	& 0.98	& 0.39	& 0.12	\\
				$ v_{4} $ & 1.08	& 0.71	& 0.98	& 0.00	& 0.83	& 1.07	\\
				$ v_{5} $ & 0.46	& 0.17	& 0.39	& 0.83	& 0.00	& 0.38	\\
				$ v_{6} $ & 0.11	& 0.54	& 0.12	& 1.07	& 0.38	& 0.00	\\
				\hline
				\rowcolor{gray!80}
				$ g(v_{j}) $ & 4.17	& 3.93	& 3.75	& 5.37	& 3.77	& 4.05 \\
				\hline		
			\end{tabular}
			\caption{TSP problem of \textsc{Example} \ref{Exm TSP Instance}, number of items $ N = 6 $. Distance matrix table.}
			\label{Tbl TSP Example Problem}
		\end{centering}
	\end{table}
	\begin{align*}
	& \Pi_{ \lt }: &
	V_{ \lt} & = [ v_{3}, v_{2}, v_{1} ], 
	\\
	& \Pi_{ \rt }: &
	V_{ \rt} & = [v_{5}, v_{6}, v_{4} ] .
	\end{align*}
	Given that this is a symmetric instance (actually MS) of TSP and that $ \Pi_{\lt} $ and $ \Pi_{\rt} $ have both three vertices, there is only one possible solution for each TSP; namely $ C_{\lt} = (v_{1}, v_{2}, v_{3}) $ and $ C_{\rt} = (v_{4}, v_{6}, v_{5}) $ respectively. See Figure \ref{Fig Merging Cycles Schematics}. Next, observe that the most expensive arc in $ C_{\lt} $ is $ (v_{1}, v_{2} ) $, while the heaviest arc in $ C_{\rt} $ is $ ( v_{4}, v_{6} ) $. Hence, these two arcs need to be removed (depicted in dashed line in Figure \ref{Fig Merging Cycles Schematics}) and replaced by $ (v_{1}, v_{6} ) $, $ (v_{4}, v_{2}  ) $ (depicted in blue in Figure \ref{Fig Merging Cycles Schematics}) respectively. Observe that this is the only possible choice in order to preserve the direction of the cycles, in particular, the remaining arcs (depicted in red in Figure \ref{Fig Merging Cycles Schematics}) would fail to build a global Hamiltonian cycle.
	\begin{figure}[h!]
		\centering
		\begin{tikzpicture}
		[scale=1.0,auto=left,every node/.style={}]
		\node (n1) at (-4, 0)  {$ v_{3} $};
		\node (n2) at (-2, 2)  {$ v_{1}  $};
		\node (n3) at (-2, -2) {$ v_{2} $};
		
		\node (n4) at (4, 0)  {$ v_{5} $};
		\node (n5) at (2, 2)  {$ v_{6} $};
		\node (n6) at (2, -2) {$ v_{4} $};
		
		\node (n10) at (1.8, -1.9) {};
		\node (n11) at (1.9, -1.8) {};
		
		\node (n12) at (-1.8, 1.9) {};
		\node (n13) at (-1.9, 1.8) {};
		
		\node (n14) at (-1.8, -1.9) {};
		\node (n15) at (-1.9, -1.8) {};
		
		\node (n16) at (1.9, 1.8)  {};
		\node (n17) at (1.8, 1.9)  {};
		
		\node (n18) at (-2, -2.2) {};
		\node (n19) at (2, -2.2) {};
		
		\node (n20) at (2, 2.2)  { };
		\node (n21) at (-2, 2.2)  { };
		
		\node (n30) at (-1.6, 0)  { 0.61 };
		\node (n31) at (1.6, 0)  { 1.07 };
		
		\node (n32) at (-3.3, 1.2)  { 0.10 };
		\node (n33) at (3.3, 1.2)  { 0.38 };
		
		\node (n34) at (-3.3, -1.2)  { 0.53 };
		\node (n35) at (3.3, -1.2)  { 0.83 };
		
		\node (n40) at (-2.8, 0)  {$ C_{\lt} $};
		\node (n41) at (2.8, 0)  {$ C_{\rt} $};

		\foreach \from/\to in {n1/n2, n3/n1,
			n5/n4, n4/n6}
		\draw[line width = 0.5mm, ->] (\from) -- (\to);
		
		\foreach \from/\to in {n2/n3, n6/n5}
		\draw[line width = 0.5mm, ->, dashed] (\from) -- (\to);
		
		\foreach \from/\to in {n6/n3, n2/n5}
		\draw[color = blue, very thick, ->] (\from) -- (\to);
		
		\foreach \from/\to in {n14/n16, n17/n15, n10/n13, n12/n11,
			n18/n19, n20/n21}
		\draw[color = red , ->] (\from) -- (\to);
		\end{tikzpicture}
		\caption{Merging Cycles Schematics for Example \ref{Exm TSP Instance} (see \ref{Def Cycle Greedy Merging} for a general definition). The cycles $ C_{\lt} = (v_{1}, v_{2}, v_{3}) $ and $ C_{\rt} = (v_{4}, v_{6}, v_{5}) $ are to be merged. The most expensive arcs for each, depicted in dashed line are removed and replaced by the only possible pair, in order to build a global Hamiltonian Cycle. Choosing other pair of arcs (depicted in red) would fail to assemble a cycle due to the orientation.}
		\label{Fig Merging Cycles Schematics} 
	\end{figure}
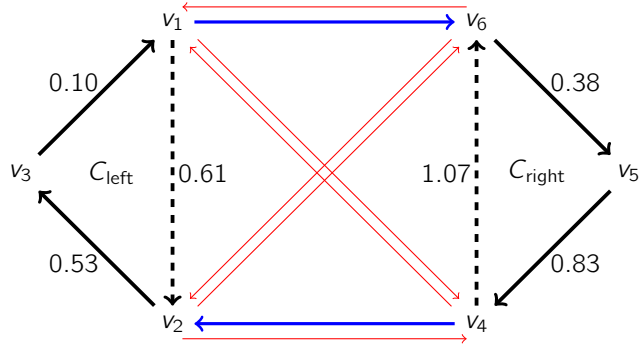
\end{example}
%
%
%
%
\subsection{Numerical Experiments}\label{Sec Numerical Experiments TSP}
%
In order to numerically asses proposed Divide-and-Conquer algorithm's effectiveness on TSP, the numerical experiments are designed according to its main parameters i.e., size ($ N \in \{8, 18,30, 44, 60, 78, 98, 120\} $) and number of trials ($ k $). The number of items was decided based on experience and observation of the method's performance; while the number of trials $ k = 2500 $ was chosen based on \textsc{Equation} \eqref{Eq Bernoulli Trials for Confidence Intervals}. The variance $ \sigma^{2} $ was chosen as the worst possible value for a sample of 150 trials, which delivered an approximate value $ k = 2500 $. (After, executing the 2500 experiments the variance showed consistency with the initial value computed from the 150 trial-sample.)

The random instances were be generated as follows
\begin{enumerate}[a.]
	\item For the MS instances of TSP, $ N $ points in the square $ [0,1] \times [0,1] \subseteq \R^{2} $ are generated (using the uniform distribution) and compute the distance matrix $ D_{\text{MS}} $.
	
	\item For the MA instances of TSP, $ N $ points on the unit circle $ S^{1} = \big\{\vec{x} : \vert \vec{x} \vert = 1 \big\} \subseteq \R^{2} $ are generated (using the uniform distribution) and compute the asymmetric distance matrix $ d_{\text{MA}} $. Given $ \vec{x}, \vec{y} \in S^{1} $, the asymmetric distance $ d_{\text{MA}}(\vec{x}, \vec{y}) $ is defined as the length of the shortest clockwise path from $ \vec{x} $ to $ \vec{y} $ contained in $ S ^{1} $. Clearly $ d_{\text{MA}}(\vec{x}, \vec{y}) \neq d_{\text{MA}}(\vec{y}, \vec{x}) $, unless $ \vec{x} $ and $ \vec{y} $ are antipodal points.
	
	\item For the NMS instances of TSP, an $ N \times N $ upper triangular matrix $ M $ is generated, whose entries are uniformly distributed in $ [0,1] $ and its diagonal is null. The distance matrix is defined as $ D_{\text{NMS}} = M + M^{ T } $.
\end{enumerate}
The performance coefficients are  analogous to those introduced in Definition \ref{Def Solution and Time Performance Coefficients BPP}, given that TSP is a minimization problem as BPP (see also Remark \ref{Ref Solution and Time Performance Coefficients BPP}). 
\begin{definition}[TSP Performance Coefficients]
	\label{Def Solution and Time Performance Coefficients TSP}
	The TSP performance coefficients are given by the percentage fractions of the solution and the computational time given by the proposed Divide-and-Conquer algorithm, with respect to the exact solution, when using the same method of resolution for both cases, i.e., 
	\begin{align}\label{Eq Solution and Time Performance Coefficients TSP}
	& S_{f}^{\Case} = 100 \times \dfrac{z^{\Case}}{z^{\Case}_{\dc}} ,  &
	& T_{f}^{\Case} = 100 \times \dfrac{T_{\dc}^{\Case}}{T^{\Case}}, &
	& \Case = \ms, \, \ma, \, \nms.
	\end{align}
	Here, $ z^{\Case}, z^{\Case}_{\dc} $ indicate the solution furnished for the full problem and for the Divide-and-Conquer algorithm respectively, when solving the corresponding case. 
\end{definition}
The table \ref{Tbl TSP Global Table} below summarizes the expected values of the performance coefficients $ S^{\Case}_{f} $ (solution fraction) $ T^{\Case}_{f} $ (time fraction). The corresponding graphs are depicted in the figures \ref{Fig TSP Global figures} (a) for $ S^{\Case}_{f} $ and (b) for $ T^{\Case}_{f} $, respectively.
\begin{table}[h]
	\begin{centering}
		\rowcolors{2}{gray!25}{white}
		\begin{tabular}{c | cc  cc  cc}
			\rowcolor{gray!80}
			\hline
			Items & \multicolumn{2}{ c  }{MS} & \multicolumn{2}{ c }{MA}
			& \multicolumn{2}{ c }{NMS} \\
			\rowcolor{gray!80} $ N $ 
			& $  S^{\ms}_{f} $ & $ T^{\ms}_{f} $ 
			& $  S^{\ma}_{f} $ & $ T^{\ma}_{f} $ 
			& $  S^{\nms}_{f} $ & $ T^{\nms}_{f} $ 
			\\
			8	& 70.15	& 75.66	& 64 & 106.98 & 59.62 & 86.32 \\
			18	& 65.72	& 79.64	& 61.37	& 69.37	& 60.59	& 105.08 \\
			30	& 68.2	& 64.61	& 87.9	& 53.11	& 67.05	& 92.1 \\
			44	& 70.26	& 67.7	& 84.12	& 95.54	& 70.74	& 80.52 \\
			60	& 70.04	& 62.68	& 98.85	& 84.4	& 74.82	& 75.35 \\
			78	& 69.69	& 55.43	& 94.45	& 66.68	& 77.22	& 63.11 \\
			98	& 70.24	& 56.6	& 97.86	& 68.91	& 78.65 & 70.58 \\
			120	& 71.52	& 50.57 & 99.94	& 77.16 & 78.44	& 57.31 \\
			\hline		
		\end{tabular}
		\caption{Performance table for the Divide-and-Conquer method, solving the TSP problem for the three types of instances: MS, MA and NMS. Bernoulli trials $ k = 2500 $. The performance coefficients $ S^{\case}_{f} $ and  $ T^{\case}_{f} $ for $ \case =  \ms, \ma, \nms $ are introduced in \textsc{Equation} \eqref{Eq Solution and Time Performance Coefficients TSP}.}
		\label{Tbl TSP Global Table}
	\end{centering}
\end{table}
\begin{figure}[h!]
	\begin{subfigure}[Solution Fraction $ S_{f} $ Performance.]
		{\includegraphics[scale = 0.5]{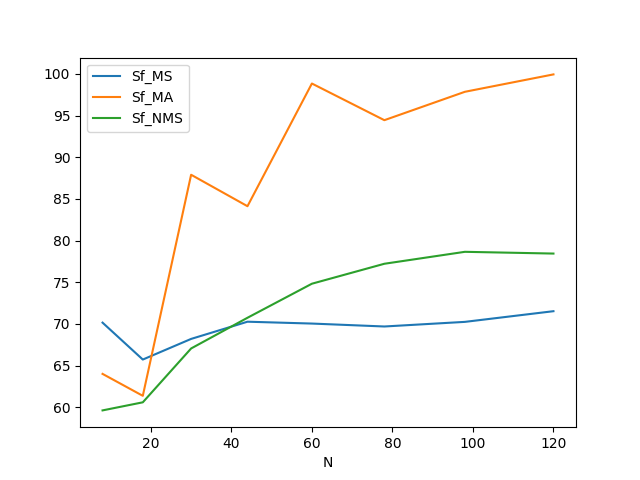}}
	\end{subfigure}
	\begin{subfigure}[Time Fraction $ T_{f} $ Performance.]
		{\includegraphics[scale = 0.5]{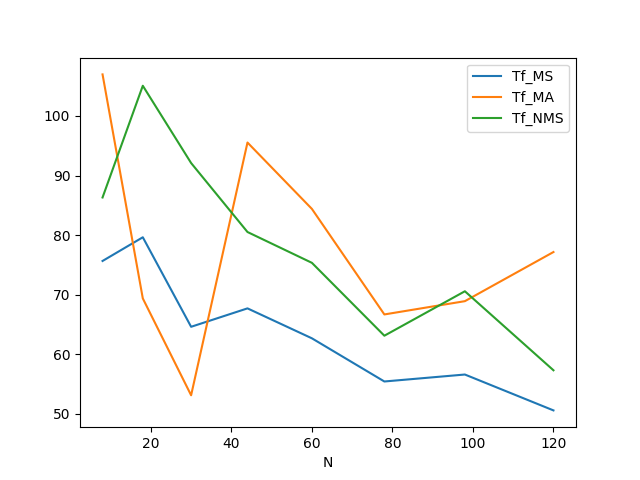}}
	\end{subfigure}
	\caption{The graphs above depict the approximate expected performance for $ k = 2500 $ Bernoulli trials. In figure (a) the solution fraction $ S^{\case}_{f} $, in figure (b) for the time fraction $ T^{\case}_{f} $ coefficients are displayed, for the three types of instance of TSP $ \case = \ms, \ma, \nms $. 
		Unlike the previous examples, the domain $ N $ is given in linear scale. The values are extracted from the table \ref{Tbl TSP Global Table}.}
	\label{Fig TSP Global figures}
\end{figure}
%
%
%
%
%
\section{Conclusions and Closing Discussion}\label{Sec Conclusions and Final Discussion}
%
%
The present work yields several conclusions which are listed below
\begin{enumerate}[(i)]
	\item  For d-KP, the tables \ref{Tbl d-KP, t = 0.25}, \ref{Tbl d-KP, t = 0.5}, \ref{Tbl d-KP, t = 0.75}, as well as their corresponding figures \ref{Fig DC d-KP, t = 0.25}, \ref{Fig DC d-KP, t = 0.5}, \ref{Fig DC d-KP, t = 0.75} show that the proposed algorithm is recommendable, starting from certain number of items $ N_{0} $. The latter, according to the required precision for the particular problem at hand. It is also clear that the tightness plays a very important role in defining the number $ N_{0} $, starting from which the algorithm is useful. Although the dimension $ D $ also has an impact on $ N_{0} $, it is clear that for different values of $ D $, the expected values of the method's performance get close to each other for $ N \geq 100 $, which is reasonably low.
	
	\item In the case of the BPP, it is clear that the proposed method has a poor interaction with the $ \nfd $ algorithm, because the required computational time is always above 100\% and a precision price is always paid. On the other hand, the $ \ffd $ algorithm complements very well the Divide-and-Conquer algorithm, giving substantial reduction of computational time for a low number of items, namely $ N \geq 50 $. Finally, the $ \bfd $ algorithm is a bad complement for the analyzed strategy, when compared with $ \ffd $ algorithm. The latter, considering that $ \bfd $ yields a marginal improvement of precision with respect to $ \ffd $, while using a significant higher fraction of computational time (although not as much as $ \nfd $). It is to be noted that the three algorithms yield significantly high precision values, even for low values of $ N $ and that ranking their usefulness is decided based on its computational time. All of the previous can be observed in the table \ref{Tbl BPP Global Table} and the figure \ref{Fig BPP Global figures}. 
	
	\item Analyzing the results for TSP, reported in table \ref{Tbl TSP Global Table} and figure \ref{Fig TSP Global figures}, it is immediate to conclude that the proposed method performs poorly for the $ \ma $ case. It shows an erratic curve of computational time, although it attains a good precision level for $ N \geq 50 $ vertices. As for the other two cases, although they show reasonable computational time fraction for $ N \geq 80 $, the precision fraction never goes above 80\%, which is very low value to consider the proposed algorithm as a valid option for solving TSP. Therefore, it follows that the method is not recommendable for TSP. 
	
	\item For the TSP, it is direct to see that the greedy function introduced, does not capture the structure/essence of the problem. An alternative way would be to define the subsets 
	$ V_{\lt}, V_{\rt} $ using a distance criterion for the metric TSP (MS). Namely, locate a pair of vertices $ u, v \in V $ satisfying  $ \Vert u - v \Vert =  \text{diameter}(V) $; then define $ V_{\lt} $ as the half of vertices closer to $ u $ (breaking ties with the index) and $ V_{\rt} = V - V_{\lt} $. However, the following must be observed. First, this approach can only be used in MS. Second, when implemented its results are no better than those presented in \cite{TSPApplegatChained, TSPFritzkeDC, TSPKazuonoriDC} and \cite{TSPJohnson2007}.  
	
	\item It must be also be reported that, without parallel implementation, the proposed algorithms not recommendable for d-KP and/or BPP, due to the involved computational time.   	
	
	\item Although the proposed Divide-and-Conquer strategy has proved to be useful for d-KP, its extension to other generalized Knapsack Problems may not be direct. It must be stressed that the tightness of the problem (introduced in Definition \ref{Def Tightness}) is strongly correlated with the performance of the method for d-KP. Analogously, other generalized Knapsack Problems have to be studied, as it was done here and observe the impact of those parameters containing structural information of the problem. 
	
	\item Deeper studies from the analytical point of view, analogous to those presented in \cite{MoralesMartinez2023}, will be pursued in the future for d-KP and BPP. In order to theoretically establish the expected behavior as well as furnishing a quality certificate for the algorithms presented here, adequate probabilistic models and spaces will be introduced and analyzed. 
	
\end{enumerate}
%
%
%
%
\section*{Acknowledgements}
\noindent The Author wishes to thank Universidad Nacional de Colombia, Sede Medell\'in for supporting the production of this work through the project Hermes 54748 as well as granting access to Gauss Server, financed by ``Proyecto Plan 150x150 Fomento de la cultura de evaluaci\'on continua a trav\'es del apoyo a planes de mejoramiento de los programas curriculares" (\url{gauss.medellin.unal.edu.co}), where the numerical experiments were executed. 


\label{up}


\end{document}